\newtheorem{theorem}{Theorem}[section]
\newtheorem{lemma}[theorem]{Lemma}
\theoremstyle{definition}
\newtheorem{remark}[theorem]{Remark}
\newtheorem{example}[theorem]{Example}
\title{{\bf\Large Problems with classic homeomorphisms and three-point boundary conditions}}
\author{{\large  Dionicio Pastor Dallos Santos }\footnote{Email: dionicio@ime.usp.br}\hspace{2mm}
{\bf\large}\vspace{1mm}\\
{\small Department of Mathematics, IME-USP, Cidade Universit\'aria,}\\
{\small  CEP 05508-090, S\~ao Paulo, SP, Brazil}
}
\date{}
\begin{document}
\maketitle

\begin{abstract}
Using Leray-Schauder degree theory  we study the existence of at least one solution for the  boundary  value problem of the type
\[
\left\{\begin{array}{lll}
(\varphi(u' ))'   = f(t,u,u') & & \\
u'(0)=u(0), \  u'(T)= bu'(0), & & \quad \quad 
\end{array}\right.
\] 
where $\varphi:  \mathbb{R}\rightarrow  \mathbb{R}$ is a homeomorphism such that $\varphi(0)=0$,  $f:\left[0, T\right]\times \mathbb{R} \times \mathbb{R}\rightarrow \mathbb{R} $ is a continuous function, and $T$ a positive real number and $b$ some non zero real number.
\end{abstract}
 
\medskip 
  
\noindent 
Mathematics Subject Classification (2010). 34B15; 47H11.

\noindent 
Key words: Boundary value problem, Leray-Schauder degree, Brouwer degree.


\section{Introduction}
The purpose of this article is to obtain  some existence results for  the nonlinear boundary value problem of the form
\begin{equation}\label{equa1}
\left\{\begin{array}{lll}
(\varphi(u' ))'   = f(t,u,u') & & \\
u'(0)=u(0), \  u'(T)= bu'(0), 
\end{array}\right.
\end{equation}
where $\varphi:  \mathbb{R}\rightarrow  \mathbb{R}   $ is a homeomorphism such that $\varphi(0)=0$,  $f:\left[0, T\right]\times \mathbb{R} \times \mathbb{R}\rightarrow \mathbb{R} $ is a continuous function, and $T$ a positive real number and $b$ some non zero real number. We call \textsl{solution } of this problem any function $u:\left[0, T\right]\rightarrow \mathbb{R}$ of class $C^{1}$ such that  $\varphi(u')$ is continuously differentiable, satisfying  the  boundary conditions  and $(\varphi(u'(t) ))' = f(t,u(t),u'(t))$ for all $t\in\left[0,T\right]$.

Recently, V. Bouches  and J. Mawhin in \cite{man11} have studied the following boundary value problem: 
\begin{equation}\label{bouches}
\left\{\begin{array}{lll}
(\varphi(u))'   = f(t,u) & & \\
u(T)= bu(0), 
\end{array}\right.
\end{equation}
where   $\varphi:\mathbb{R}\rightarrow (-a,a)$ is a  homeomorphism such that $\varphi(0)=0$,  $f:\left[0, T\right]\times \mathbb{R} \rightarrow \mathbb{R}$ is a continuous function, $a$ and $T$ being positive real numbers and $b$ some non zero real number. The authors obtained the existence of solutions using topological methods based upon  Leray-Schauder degree  \cite{man12}.

Inspired by these results, the main aim of this paper is  to study  the existence of at least one solution for the  boundary  value problem (\ref{equa1}) using Schauder fixed point theorem or Leray-Schauder degree. For this, we reduce the nonlinear boundary value problem to some fixed points problem. The first consequence of this reduction, is that this operator is defined in $C^{1}$. Second, it is completely continuous. Next, adapts a technique introduced by  Ward \cite{wardjr} for the search of a priori bounds for the  possible fixed points required by a Leray-Schauder approach. Such  a  problem does not seem to have been studied in the literature.

The  paper is organized as follows. In Section 2, we establish the notation and terminology used throughout the work. Section 3,  we formulate the fixed point operator equivalent to the problem (\ref{equa1}). Section 4, we give main results in this paper. For these results, we adapt the ideas of  \cite{ man5, man4, man8, ma1, ma, man9, man11} to the present situation.

\section{{Notation and terminology}}
\label{S:-1}
We first introduce some notation. For fixed $T$, we denote  the usual norm in $L^{1}=L^{1}(\left[0,T\right], \mathbb R)$ for $\left\| \cdot \right\|_{L^{1}}$. For $C=C(\left[0,T\right], \mathbb R)$ we indicate the Banach space of all  continuous functions from $\left[0, T\right]$ into $\mathbb R$ witch the norm  $\left\| \cdot \right\|_{\infty}$ and for  $C^{1}=C^{1}(\left[0,T\right], \mathbb R)$  we designate the Banach space of continuously differentiable functions  from $\left[0, T\right]$ into $\mathbb R$ endowed with the usual norm $\left\|u\right\|_{1}=\left\|u\right\|_{\infty} +  \left\|u' \right\|_{\infty}$.

We introduce the following applications:
\medskip 
  
\noindent
the  \textit{Nemytskii operator} $N_f:C^{1} \rightarrow C $, 
\begin{center}
 $N_f (u)(t)=f(t,u(t),u'(t))$, 
\end{center}
the  \textit{integration operator}   $H:C \rightarrow C^{1}$, 
\begin{center}
$ H(u)(t)=\int_0^t u(s)ds$, 
\end{center}
the following continuous linear applications:
\begin{center}
$Q:C \rightarrow C, \ \  Q(u)(t)=\frac{1}{T}\int_0^T u(s)ds$,
\end{center}
\begin{center}
$P:C \rightarrow C, \ \  P(u)(t)=u(0)$,
\end{center}
and finally, we introduced the continuous application
\begin{center}
$B_{\varphi,b}: \mathbb{R} \rightarrow \mathbb{R},  \ \   B_{\varphi,b}(x)=\varphi(bx)-\varphi(x) $.
\end{center}

For  $u\in C$, we write
\begin{center}
$ u_{m}=\displaystyle  \min_{[0,T]} u,  \  u_{M}= \displaystyle \max_{[0,T]}u,  \   u^{+}=\displaystyle \max\left\{ u,0 \right\},  \  u^{-}=\displaystyle \max \left\{ -u,0 \right\}$. 
\end{center}


\section{Fixed point formulations}
\label{S:0}
Let consider the operator
\begin{center}
$M_{1}:C^{1}\rightarrow C^{1}$,

$u\mapsto Q(N_{f}(u))- \frac{B_{\varphi,b}(Pu)}{T}+H\left(\varphi^{-1}\left[\varphi(Pu)+H(N_{f}(u)-Q(N_{f}(u)))+\frac{\Psi B_{\varphi,b}(Pu)}{T}\right]\right)+P(u)$
\end{center}
where $\Psi$ denotes the function which sends  $t$ on $t$ and $\varphi^{-1}$ is understood as the operator  $\varphi^{-1}: C \rightarrow C$ defined for $\varphi^{-1}(v)(t)=\varphi^{-1}(v(t))$. It is clear that $\varphi^{-1}$ is continuous and sends bounded sets into bounded sets.

Using the theorem of Arzel\`a-Ascoli we show that the operator  $M_{1}$ is completely continuous.
\begin{lemma}\label{lema1}
 The operator  $M_{1}:C^{1} \rightarrow C^{1}$ is completely continuous.
\end{lemma}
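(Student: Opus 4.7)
The plan is to verify the two clauses of complete continuity separately: first, continuity of $M_1$ as a map $C^1\to C^1$; second, relative compactness of the image of every bounded set. Continuity is essentially bookkeeping: the operator $M_1$ is a finite sum and composition of maps $N_f$, $H$, $Q$, $P$, $\varphi^{-1}$ (acting on $C$), $\varphi$, and $B_{\varphi,b}$, each of which is continuous between the relevant spaces. The Nemytskii operator $N_f:C^1\to C$ is continuous because $f$ is continuous on compacta and convergence in $C^1$ forces simultaneous uniform convergence of $u_n$ and $u_n'$. The linear operators $Q,P$ are trivially continuous, $H:C\to C^1$ is continuous, and $\varphi^{-1}$ acting pointwise is continuous on $C$ since $\varphi^{-1}:\mathbb R\to\mathbb R$ is continuous and uniformly continuous on bounded sets. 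So the continuity claim reduces to chasing a diagram of continuous maps.

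For the compactness clause, I would fix a bounded set $\Omega\subset C^1$ with $\|u\|_1\le R$ for every $u\in\Omega$, and check the two Arzelà--Ascoli conditions for both $M_1(\Omega)$ and its set of derivatives. Using the continuity of $f$ on $[0,T]\times[-R,R]^2$, one gets a constant $K$ with $\|N_f(u)\|_\infty\le K$ for all $u\in\Omega$. Since $|Pu(0)|\le R$, the terms $\varphi(Pu)$ and $B_{\varphi,b}(Pu)$ remain bounded by constants depending only on $R$ and $\varphi$. Hence the argument
\[
\varphi(Pu)+H(N_f(u)-Q(N_f(u)))+\tfrac{\Psi B_{\varphi,b}(Pu)}{T}
\]
is uniformly bounded in $C$, and since $\varphi^{-1}$ sends bounded sets to bounded sets, so is the outer integrand. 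This already yields uniform boundedness of $M_1(\Omega)$ in $C^1$ norm, because
\[
(M_1 u)'(t)=\varphi^{-1}\!\left[\varphi(u(0))+\int_0^t(N_f(u)(s)-Q(N_f(u)))\,ds+\tfrac{t}{T}B_{\varphi,b}(u(0))\right],
\]
the constant pieces $Q(N_f u)$, $B_{\varphi,b}(Pu)/T$ and $Pu$ contributing only to the value of $M_1 u$, not to its derivative.

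Equicontinuity of $M_1(\Omega)$ is immediate, since $\|(M_1u)'\|_\infty$ is uniformly bounded, making the family equi-Lipschitz. The more delicate step, and the one I expect to be the main obstacle, is equicontinuity of $(M_1u)'$ for $u\in\Omega$. Here one writes, for $t_1,t_2\in[0,T]$,
\[
(M_1u)'(t_2)-(M_1u)'(t_1)=\varphi^{-1}(A_u(t_2))-\varphi^{-1}(A_u(t_1)),
\]
where $A_u$ denotes the bracketed argument. One checks that $A_u(t_2)-A_u(t_1)$ is bounded by $(K+K+|B_{\varphi,b}(u(0))|/T)\,|t_2-t_1|$, uniformly in $u\in\Omega$; the values $A_u(t)$ lie in a common bounded interval $I\subset\mathbb R$. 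Since $\varphi^{-1}$ is uniformly continuous on $I$, for every $\varepsilon>0$ there is $\delta>0$ such that $|t_2-t_1|<\delta$ forces $|(M_1u)'(t_2)-(M_1u)'(t_1)|<\varepsilon$ for all $u\in\Omega$. This is where the homeomorphism assumption on $\varphi$ is genuinely used, and it is what makes the argument cleaner for classical homeomorphisms than for singular or bounded ones. Combining these facts with Arzelà--Ascoli applied separately to $\{M_1u\}_{u\in\Omega}$ and $\{(M_1u)'\}_{u\in\Omega}$ yields relative compactness of $M_1(\Omega)$ in the $C^1$ topology, completing the proof.
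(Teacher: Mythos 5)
Your argument is correct and follows essentially the same route as the paper: the same uniform bound on $N_f(u)$ over a bounded set and the same Lipschitz estimate on $H\bigl(N_f(u)-Q(N_f(u))\bigr)$ feed into an Arzel\`a--Ascoli compactness argument. The only cosmetic differences are that you obtain equicontinuity of the derivatives $(M_1u)'$ directly from uniform continuity of $\varphi^{-1}$ on bounded intervals, whereas the paper first extracts a uniformly convergent subsequence of the inner family and then applies the continuity of the superposition operator $\varphi^{-1}:C\to C$, and that you spell out the continuity of $M_1$, which the paper leaves implicit.
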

\begin{proof}
Let $ \Lambda \subset C^{1}$ be a bounded set. Then, if $u \in \Lambda$, there exists a constant  $\rho>0$ such  that  
\begin{equation}\label{dp4}
\left\| u\right\|_{1}\leq \rho.
\end{equation}
Next, we show that  $\overline{M_{1}(\Lambda)}\subset  C^{1}$ is a compact set. Let $(v_{n})_{n} $  be a sequence in   $M_{1}(\Lambda)$,  and 
 let $(u_{n})_{n}$  be a sequence  in  $\Lambda$ such that  $v_{n}=M_{1}(u_{n})$. Using (\ref{dp4}), we have that there exists a constant  $L_{1}>0$ such that,  for all $n\in \mathbb{N}$,   
\begin{center}
 $\left\| N_{f}(u_{n})\right\|_{\infty}\leq L_{1}$,
\end{center}
which implies that
\begin{center}
$\left\| H(N_{f}(u_{n})-Q(N_f(u_{n}))) \right\|_{\infty}\leq 2L_{1}T$.
\end{center}
Hence the sequence  $ \left(H(N_{f}(u_{n})-Q(N_f(u_{n})))\right)_{n}$ is bounded in  $C$. Moreover, for  $t, t_{1}\in\left[0, T\right]$ and for all $n\in \mathbb{N}$, we have that 
\begin{align*}
&\left| H(N_{f}(u_{n})-Q(N_f(u_{n})))(t) - H(N_{f}(u_{n})-Q(N_f(u_{n})))(t_{1})  \right|\\
&\leq \left|\int_{t_{1}}^{t}N_f(u_{n})(s) ds\right| + \left| \int_{t_{1}}^{t}Q(N_f(u_{n}))(s) ds\right|\\
&\leq L_{1}\left|t-t_1\right| +  \left|t-t_1\right|\left\| Q(N_f(u_{n}))\right\|_{\infty}\\
&\leq 2L_{1}\left|t-t_1\right|,
\end{align*} 
which  implies that  $ \left(H(N_{f}(u_{n})-Q(N_f(u_{n})))\right)_{n}$ is equicontinuous. Thus, by the Arzel\`a-Ascoli theorem there is a subsequence of  $\left(H(N_{f}(u_{n})-Q(N_f(u_{n})))\right)_{n}$, which we call $\left(H(N_{f}(u_{n_{j}})-Q(N_f(u_{n_{j}})))\right)_{j}$, which is  convergent in $C$. Then, passing to a subsequence if  necessary, we obtain that the sequence $$ \left(H(N_{f}(u_{n_{j}})-Q(N_f(u_{n_{j}}))) + \frac{\Psi B_{\varphi,b}(Pu_{n_{j}})}{T} +  \varphi(P(u_{n_{j}}))\right)_{j}$$
is convergent in  $C$. Using the fact that   $\varphi^{-1}: C \rightarrow  C$ is continuous it follows from
\begin{center}
$M_{1}(u_{n_{j}})'=\varphi^{-1}\left[\left(H(N_{f}(u_{n_{j}})-Q(N_f(u_{n_{j}}))) + \frac{\Psi B_{\varphi,b}(Pu_{n_{j}})}{T}  + \varphi(P(u_{n_{j}}))\right)       \right]$
\end{center}
that the sequence  $(M_{1}(u_{n_{j}})')_{j}$ is convergent in  $C$. Therefore, passing if necessary to a subsequence, we have that  $(v_{n_{j}})_{j}=( M_{1}(u_{n_{j}}))_{j}$  is convergent in $C^{1}$. Finally, let  $(v_{n})_{n}$ be a sequence in  $\overline{M_{1}(\Lambda)}$. Let  $(z_{n})_{n}\subseteq M_{1}(\Lambda)$  be such that
\[
\lim_{n \to \infty}\left\| z_{n}-v_{n}\right\|_{1}=0.
\]
Let $(z_{n_{j}})_{j}$ be  a subsequence of  $(z_{n})_{n}$  such that  converge to $z$. It follows that  $z\in \overline{M_{1}(\Lambda)}$ and  $(v_{n_{j}})_{j}$ converge to $z$. This concludes the proof.
\end{proof}
\begin{lemma}\label{mate2}
$u \in C^{1}$ is  a solution  of (\ref{equa1}) if and only if  $u$ is a fixed point of the operator $M_{1}$.
\end {lemma}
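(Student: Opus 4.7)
The plan is to verify a two-way bookkeeping calculation: differentiate or integrate the fixed point equation and match it with the ODE plus boundary conditions, using the fact that $Q(N_f(u))$, $P(u)$, and $B_{\varphi,b}(Pu)/T$ are all constant functions of $t$.

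For the direct implication, I would start from a fixed point $u = M_1(u)$. First I would evaluate at $t=0$: since $H$ and the term $H(\varphi^{-1}[\,\cdot\,])$ both vanish at $0$, the equation $u(0) = M_1(u)(0)$ collapses to
\[
0 = Q(N_f(u)) - \frac{B_{\varphi,b}(Pu)}{T},
\]
which records the compatibility identity $Q(N_f(u)) = B_{\varphi,b}(Pu)/T$. Next, differentiating the fixed point equation (only the $H(\varphi^{-1}[\,\cdot\,])$ term depends on $t$) yields
\[
\varphi(u'(t)) = \varphi(u(0)) + H\bigl(N_f(u)-Q(N_f(u))\bigr)(t) + \frac{t\, B_{\varphi,b}(u(0))}{T}.
\]
Setting $t=0$ gives $\varphi(u'(0))=\varphi(u(0))$, hence $u'(0)=u(0)$. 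Setting $t=T$ and using that $H(N_f(u)-Q(N_f(u)))(T)=0$ gives $\varphi(u'(T))=\varphi(u(0))+B_{\varphi,b}(u(0))=\varphi(bu(0))=\varphi(bu'(0))$, hence $u'(T)=bu'(0)$. Finally, differentiating once more produces
\[
(\varphi(u'(t)))' = N_f(u)(t) - Q(N_f(u)) + \frac{B_{\varphi,b}(Pu)}{T},
\]
and the compatibility identity from the evaluation at $0$ cancels the last two terms, leaving $(\varphi(u'(t)))'=f(t,u(t),u'(t))$.

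For the converse, I would start from a solution $u$ and integrate $(\varphi(u'))'=f(t,u,u')$ from $0$ to $t$ to obtain $\varphi(u'(t))=\varphi(u'(0))+H(N_f(u))(t)$. Evaluating at $t=T$ and applying the two boundary conditions gives the key identity $T\,Q(N_f(u))=\varphi(bu(0))-\varphi(u(0))=B_{\varphi,b}(u(0))$. Then I would split $H(N_f(u)) = H(N_f(u)-Q(N_f(u))) + \Psi\cdot Q(N_f(u))$ and substitute $Q(N_f(u)) = B_{\varphi,b}(Pu)/T$ to recognize the bracketed expression inside $\varphi^{-1}$ in the definition of $M_1$. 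Applying $\varphi^{-1}$ yields $u'(t)$ equal to the integrand used in $M_1$, and integrating from $0$ to $t$ and adding $u(0)=P(u)(t)$ reproduces $M_1(u)(t)$, after again cancelling $Q(N_f(u))$ against $B_{\varphi,b}(Pu)/T$.

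The only thing that requires care, and which I would view as the main obstacle, is to avoid double-counting the constant terms: the pieces $Q(N_f(u))$ and $-B_{\varphi,b}(Pu)/T$ appearing \emph{outside} the integral in $M_1$ are exactly what enforces the compatibility between the ODE and the periodic-type boundary condition $u'(T)=bu'(0)$, and one has to recognize that they must cancel precisely when the fixed-point condition is combined with the evaluation at $t=0$. Aside from this piece of bookkeeping, the rest is straightforward differentiation and integration.
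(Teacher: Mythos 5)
Your proposal is correct and follows essentially the same computation as the paper: the paper writes the argument as a single chain of equivalences (integrate the ODE, use $u'(T)=bu'(0)\Leftrightarrow Q(N_f(u))=B_{\varphi,b}(u'(0))/T$, apply $\varphi^{-1}$, integrate, and absorb the compatibility constraint and $u'(0)=u(0)$ into the fixed-point equation), while you present the two implications separately. The key bookkeeping point you isolate --- that the constant terms $Q(N_f(u))-B_{\varphi,b}(Pu)/T$ outside the integral encode the boundary condition via evaluation at $t=0$ --- is exactly the mechanism the paper uses in its final equivalence.
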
 
\begin{proof}
Let $u\in C^{1}$, we have the following equivalences:
\medskip
 
$(\varphi(u'))'  = N_f (u), \  u'(T)=bu'(0), \  u'(0)=u(0)$
\medskip 
 
\medskip 
  
\noindent
$ \Leftrightarrow (\varphi(u'))'  = N_f (u)- \left(Q(N_{f}(u))- \frac{ B_{\varphi,b}(u'(0))}{T}\right)$,    

$Q(N_{f}(u))- \frac{ B_{\varphi,b}(u'(0))}{T}=0, \  u'(0)=u(0)$
\medskip 
 
\medskip 
  
\noindent
$ \Leftrightarrow \varphi(u')=H \left(N_f (u)-Q(N_{f}(u))\right)+ \frac{\Psi B_{\varphi,b}(u'(0))}{T} + \varphi(u'(0))$,
  
$Q(N_{f}(u))- \frac{ B_{\varphi,b}(u'(0))}{T}=0, \  u'(0)=u(0)$ 
\medskip 
 
\medskip 
  
\noindent   
$\Leftrightarrow u'= \varphi^{-1}\left[H( N_f (u)-Q(N_{f}(u)))+ \frac{\Psi B_{\varphi,b}(u'(0))}{T}+\varphi(u'(0))\right]$,
 
$Q(N_{f}(u))- \frac{ B_{\varphi,b}(u'(0))}{T}=0, \  u'(0)=u(0)$
\medskip 
 
\medskip 
  
\noindent   
$\Leftrightarrow u= H\left(\varphi^{-1}\left[H( N_f (u)-Q(N_{f}(u)))+ \frac{\Psi B_{\varphi,b}(u'(0))}{T}+\varphi(u'(0))\right]\right)+u(0)$,
 
$Q(N_{f}(u))- \frac{ B_{\varphi,b}(u'(0))}{T}=0,  \   u'(0)=u(0)$   
\medskip 
 
\medskip 
  
\noindent    
$\Leftrightarrow u= H\left(\varphi^{-1}\left[H( N_f (u)-Q(N_{f}(u)))+ \frac{\Psi B_{\varphi,b}(u(0))}{T}+\varphi(u(0))\right]\right)+u(0)$,
 
 $Q(N_{f}(u))- \frac{ B_{\varphi,b}(u(0))}{T}=0 $     
\medskip 
 
\medskip 
  
\noindent
$\Leftrightarrow u= Q(N_{f}(u)) - \frac{B_{\varphi,b}(Pu)}{T} + H\left(\varphi^{-1}\left[H( N_f (u)-Q(N_{f}(u))) + \frac{\Psi B_{\varphi,b}(Pu)}{T} + \varphi(Pu)\right]\right) + Pu$.
\end{proof}

\begin {remark}
 Note that  $u'(T)= bu'(0) \Leftrightarrow  Q(N_{f}(u))= \frac{ B_{\varphi,b}(u'(0))}{T}$.
\end {remark}
In order to apply Leray-Schauder degree to the operator $M_{1}$, we  introduced a family of problems  depending on a parameter $\lambda$. We remember  that to each  continuous function  $f:\left[0, T\right]\times \mathbb{R} \times \mathbb{R}\rightarrow \mathbb{R}$ we associate 
its Nemytskii operator $N_{f}: C^{1}\rightarrow C$ defined by
\begin{center}
$N_{f}(u)(t) = f(t, u(t), u'(t))$.
\end{center}
For  $ \lambda \in [0,1]$, we consider the family of boundary value problems
\begin{equation}\label{misto3}
\left\{\begin{array}{lll}
(\varphi(u' ))'   = \lambda N_{f}(u)+(1-\lambda)Q(N_{f}(u)) & & \\
u'(0)=u(0), \  u'(T)=bu'(0). 
\end{array}\right.
\end{equation}
 Notice that (\ref{misto3}) coincide with (\ref{equa1}) for $\lambda =1$. So, for  each $ \lambda \in [0,1]$, the operator associated to  \ref{misto3} by Lemma \ref{mate2}  is the  operator $M(\lambda,\cdot)$, where $M$  is defined on $[0,1]\times C^{1}$  by 
\begin{center}
$ M(\lambda,u)= Q(N_{f}(u))- \frac{B_{\varphi,b}(Pu)}{T}+ H\left(\varphi^{-1}\left[\varphi(Pu)+ \lambda H(N_{f}(u)-Q(N_{f}(u)))+\frac{\Psi B_{\varphi,b}(Pu)}{T}\right]\right)+P(u)$.
\end{center}
Using the same arguments as in the proof  of Lemma \ref{lema1}  we show that the operator  $M$ is completely continuous. Moreover, using the same reasoning as above,  the system (\ref{misto3}) (see Lemma \ref{mate2}) is equivalent to the problem 
\begin{equation}\label{igualdade}
u=M(\lambda, u).
\end{equation}
In order to prove the existence at least one solution of (\ref{equa1}) we consider the family of problems
\begin{equation}\label{equazeta}
\left\{\begin{array}{lll}
(\varphi(u' ))'   = \lambda Q(N_{f}(u)) & & \\
\int_0^T f(t,u(t),u'(t))dt= \varphi(bu(0))-\varphi(u(0)), \ u'(0)=u(0). 
\end{array}\right.
\end{equation}
 On the other hand, we consider the homotopy
\begin{equation}\label{equaope}
 Z(\lambda,u)=P(u)+Q(N_{f}(u)) -\frac{B_{\varphi,b}(Pu)}{T} + H\left(\varphi^{-1} \left[\lambda \frac{\Psi B_{\varphi,b}(Pu)}{T}+\varphi(Pu)\right]\right),
\end{equation}
where  $Z(1,\cdot)=M(0,\cdot)$. By the same  argument as above, the 
operator $Z:\left[0,1\right]\times C^{1}\rightarrow C^{1}$  (see Lemma \ref{lema1}) is  completely continuous.
\begin{lemma}\label{soluciz}
 If  $(\lambda, u)\in\left[0,1\right] \times C^{1}$ is such that $u=Z(\lambda,u)$, then  $u$ is  solution of  (\ref{equazeta}).
\end{lemma}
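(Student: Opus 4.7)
The plan is to unfold the identity $u = Z(\lambda,u)$ into the three conditions listed in (\ref{equazeta}) by evaluating at a boundary point, differentiating, and then substituting.

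First I would evaluate both sides of $u = Z(\lambda,u)$ at $t = 0$. Since $H(\cdot)(0) = 0$ and $Pu(0) = u(0)$, the terms involving $H$ vanish and the identity collapses to $u(0) = u(0) + Q(N_f(u)) - \frac{B_{\varphi,b}(Pu)}{T}$. Cancelling $u(0)$ yields
\[
Q(N_f(u)) = \frac{B_{\varphi,b}(u(0))}{T},
\]
which, multiplied by $T$, is exactly the integral boundary condition $\int_0^T f(t,u(t),u'(t))\,dt = \varphi(bu(0)) - \varphi(u(0))$ of (\ref{equazeta}).

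Next I would differentiate $u = Z(\lambda,u)$ in $t$. Because $P(u)$, $Q(N_f(u))$ and $B_{\varphi,b}(Pu)/T$ are constants in $t$, their derivatives vanish, and the fundamental theorem of calculus applied to the outer $H$ gives
\[
u'(t) = \varphi^{-1}\!\left[\lambda\,\frac{t\,B_{\varphi,b}(Pu)}{T} + \varphi(Pu)\right].
\]
Evaluating at $t=0$ produces $u'(0) = \varphi^{-1}(\varphi(u(0))) = u(0)$, which is the second boundary condition of (\ref{equazeta}).

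Finally, applying $\varphi$ to the displayed expression for $u'$ yields $\varphi(u'(t)) = \lambda\,\frac{t\,B_{\varphi,b}(u(0))}{T} + \varphi(u(0))$, and one further differentiation gives $(\varphi(u'))'(t) = \lambda\,B_{\varphi,b}(u(0))/T$. Substituting the identity obtained in the first step, $B_{\varphi,b}(u(0))/T = Q(N_f(u))$, we conclude $(\varphi(u'))' = \lambda\,Q(N_f(u))$, which is the differential equation of (\ref{equazeta}). No step really poses an obstacle; the only thing to watch is keeping track of which quantities are constant in $t$ (namely $Pu$, $Q(N_f(u))$ and $B_{\varphi,b}(Pu)$) so that the differentiations are performed correctly, and remembering that $\Psi(t) = t$ so $\Psi B_{\varphi,b}(Pu)/T$ is a genuinely $t$-dependent function.
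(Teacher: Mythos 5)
Your proof is correct and follows essentially the same route as the paper's own argument: the paper likewise reads off the integral condition and the formula $u'(t) = \varphi^{-1}\left[t\lambda \frac{\varphi(bu(0))-\varphi(u(0))}{T}+\varphi(u(0))\right]$ from $u=Z(\lambda,u)$, then applies $\varphi$ and differentiates to get the equation, and evaluates at $t=0$ to get $u'(0)=u(0)$. You have merely made explicit the evaluation at $t=0$ that the paper leaves implicit, which is a welcome addition rather than a deviation.
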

\begin{proof}
Let $(\lambda, u)\in\left[0,1\right] \times C^{1}$ be such that $u=Z(\lambda,u)$. It follows that 
\begin{center}
$\int_0^T f(t,u(t),u'(t))dt= \varphi(bu(0))-\varphi(u(0))$
\end{center}
and
\begin{equation}\label{equa2}
u'(t) = \varphi^{-1}\left[t\lambda \frac{\varphi(bu(0))- \varphi(u(0))}{T}+ \varphi(u(0)) \right]
\end{equation}
for all  $t\in \left[0, T\right]$. Applying  $\varphi$ to both members and differentiating, we deduce that  
\begin{center}
$(\varphi(u'(t)))' = \lambda \frac{\varphi(bu(0))- \varphi(u(0))}{T} = \lambda Q(N_{f}(u)) $
\end{center}
 for all $t\in \left[0, T\right]$. 

On the other hand, using (\ref{equa2}) for   $t=0$, we obtain  $u'(0)=u(0)$. This completes the  proof. 
\end{proof}


\section{Main results}
\label{S:2}
In this section, we present and prove our main results. These results are inspired on works by   Bereanu and  Mawhin \cite{ma} and  Man\'asevich and  Mawhin \cite{man}. We denote by $deg_{B}$ the Brouwer degree and  for $deg_{LS}$ the 
Leray-Schauder degree, and define the mapping $G:\mathbb{R}^{2}\rightarrow \mathbb{R}^{2}$ by 
\begin{equation}\label{gato}
G:\mathbb{R}^{2}\rightarrow \mathbb{R}^{2}, \  \quad (x,y)\mapsto \left( \frac{B_{\varphi, b}(x)}{T}-\frac{1}{T}\int_0^T f(t, x+yt, y)dt, -x+y\right).
\end{equation}
\begin{theorem}\label{teoemaprincipalmisto}
Assume that  $\Omega$ is an open bounded set in  $C^{1}$ such that the following conditions hold.
\begin{enumerate}
\item If $(\lambda , u) \in \left[0,1\right] \times C^{1}$ is such that $u=Z(\lambda, u)$, then $u\notin \partial \Omega$.
\item  The Brouwer degree	
\begin{center}
$deg_{B}(G,\Omega \cap  \mathbb{R}^{2},0)\neq 0$,
\end{center}
where we consider the natural identification $(x,y)\approx x+yt$ of  $\mathbb R^{2}$ with related functions in $C^{1}$.
\item For each $\lambda \in (0, 1]$ the problem  (\ref{misto3})  has no solution on $\partial\Omega$.
\end{enumerate}
Then  (\ref{equa1}) has a solution.
\end{theorem}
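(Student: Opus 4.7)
My plan is to combine two homotopy invariance arguments for the Leray--Schauder degree with a reduction to a two-dimensional Brouwer degree. The key observation is that $M(1,\cdot)=M_{1}$, whose fixed points are solutions of (\ref{equa1}) by Lemma \ref{mate2}, while $M(0,\cdot)=Z(1,\cdot)$ by direct inspection of the two formulas, and $Z(0,u)(t)$ turns out to be an affine function of $t$ for every $u\in C^{1}$, so the relevant degree at the end of the chain can be computed on a two-dimensional subspace.

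First I would verify the admissibility of the two homotopies on $\partial\Omega$. Hypothesis 3 gives that $M(\lambda,\cdot)$ has no fixed point on $\partial\Omega$ for $\lambda\in(0,1]$; the case $\lambda=0$ is covered by hypothesis 1 at $\lambda=1$, since $M(0,\cdot)=Z(1,\cdot)$; and hypothesis 1 itself ensures that $Z(\lambda,\cdot)$ has no fixed point on $\partial\Omega$ for any $\lambda\in[0,1]$. Homotopy invariance of the Leray--Schauder degree then yields
\[
\deg_{LS}(I-M_{1},\Omega,0)=\deg_{LS}(I-M(0,\cdot),\Omega,0)=\deg_{LS}(I-Z(1,\cdot),\Omega,0)=\deg_{LS}(I-Z(0,\cdot),\Omega,0).
\]

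To handle the last degree, I would note that $H(\varphi^{-1}[\varphi(Pu)])(t)=H(u(0))(t)=u(0)\,t$, so
\[
Z(0,u)(t)=u(0)(1+t)+Q(N_{f}(u))-\frac{B_{\varphi,b}(u(0))}{T},
\]
which is affine in $t$. Thus $Z(0,\cdot)$ sends $C^{1}$ into the two-dimensional subspace $V=\{x+yt:x,y\in\mathbb{R}\}$, and the reduction property of the Leray--Schauder degree gives
\[
\deg_{LS}(I-Z(0,\cdot),\Omega,0)=\deg_{B}\bigl((I-Z(0,\cdot))|_{V},\Omega\cap V,0\bigr).
\]
For $u=x+yt\in V$ a short computation yields
\[
(I-Z(0,\cdot))(u)(t)=\left(\frac{B_{\varphi,b}(x)}{T}-\frac{1}{T}\int_{0}^{T}f(s,x+ys,y)\,ds\right)+(y-x)t,
\]
which under the identification $(x,y)\leftrightarrow x+yt$ is precisely $G(x,y)$ from (\ref{gato}). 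By hypothesis 2 this Brouwer degree is nonzero, hence $\deg_{LS}(I-M_{1},\Omega,0)\neq 0$ and the existence property of the Leray--Schauder degree furnishes a fixed point of $M_{1}$ in $\Omega$, that is, a solution of (\ref{equa1}) by Lemma \ref{mate2}.

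The main obstacle, though not a deep one, is the bookkeeping around the junction of the two homotopies (the $\lambda=0$ endpoint of $M$ must be matched with the $\lambda=1$ endpoint of $Z$, and its admissibility drawn from hypothesis 1 rather than hypothesis 3), together with checking that the restriction of $I-Z(0,\cdot)$ to the affine subspace coincides component-by-component with $G$ rather than a relabelling; once those identifications are verified the conclusion follows mechanically from the standard properties of the degree.
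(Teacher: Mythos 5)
Your proposal is correct and follows essentially the same route as the paper: the same two chained homotopies $M(\lambda,\cdot)$ and $Z(\lambda,\cdot)$ joined at $M(0,\cdot)=Z(1,\cdot)$, the same use of hypotheses 1 and 3 for admissibility, and the same reduction of $\deg_{LS}(I-Z(0,\cdot),\Omega,0)$ to the Brouwer degree of $G$ on $\Omega\cap\mathbb{R}^{2}$. Your explicit verification that $(I-Z(0,\cdot))\vert_{V}$ coincides with $G$ under the identification $(x,y)\approx x+yt$ is a welcome detail that the paper merely asserts.
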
 
\begin{proof}
Using  hypothesis 1 and that $Z$ is completely continuous, we deduce that for each $\lambda \in \left[0,1\right]$, the Leray-Schauder 
degree $deg_{LS}(I-Z(\lambda,\cdot),\Omega,0)$ is well-defined, and by the homotopy invariance  imply that
\begin{center}
$deg_{LS}(I-Z(1 ,\cdot),\Omega,0)= deg_{LS}(I-Z(0,\cdot),\Omega,0)$.
\end{center}

On the other hand, we have 
\begin{center}
$deg_{LS}(I-Z(0,\cdot),\Omega,0) = deg_{LS}(I-(P+QN_{f}- \frac{B_{\varphi,b}P}{T}+HP) ,\Omega,0)$.
\end{center}
But the range of the mapping
\begin{center}
$u\longrightarrow P(u)+Q(N_{f}(u))- \frac{B_{\varphi,b}(P(u))}{T}+H(P(u))$
\end{center}
is contained in the subspace of related functions, isomorphic to $\mathbb{R}^{2}$.  Thus, using a reduction property of  Leray-Schauder degree \cite{man7, man12}
\begin{align*}
& deg_{LS}(I-(P+QN_{f}- \frac{B_{\varphi,b}P}{T}+HP) ,\Omega,0)\\
& = deg_{B}\left(I-(P+QN_{f}- \frac{B_{\varphi,b}P}{T}+HP)\left|_{\overline{\Omega \cap \mathbb{R}^{2}}}\right. ,\Omega \cap \mathbb{R}^{2}, 0\right)\\
&=deg_{B}(G,\Omega \cap \mathbb{R}^{2},0)\neq 0.
\end{align*}

On the other hand, using the  fact that $M$ is completely continuous,  that $Z(1,\cdot)$  coincides with the operator  $M(0,\cdot)$ and the hypothesis 3, we deduce  that for each $\lambda \in \left[0,1\right]$, \  $deg_{LS}(I-M(\lambda,\cdot),\Omega,0)$ is well-defined, and by the homotopy invariance we have 
\begin{center}
$deg_{LS}(I-M(1,\cdot),\Omega,0)= deg_{LS}(I-M(0,\cdot),\Omega,0)$.
\end{center}
Hence,  $deg_{LS}(I-M(1,\cdot),\Omega,0)\neq 0$. This, in turn, implies that there exists  $u\in\Omega$  such that  $M_{1}(u)=u$, which is a solution for (\ref{equa1}).
\end {proof}
The problem (\ref{equa1}) can be studied by placing some special conditions on $f(t,x,y)$.
 \begin{theorem}\label{perro} 
 Assume that the following conditions hold  for a opportune  $\rho > 0$.  
\begin{enumerate}
\item  There exists a function  $h\in C$ such that
\begin{center}
$\left|f(t,x,y)\right|\leq h(t) \  for  \  all  \  (t,x,y)\in \left[0,T\right]\times \mathbb{R}^{2}$.
\end{center}
\item There exists $M_{1}<M_{2}$  such that for all  $u\in C^{1}$,
\begin{center}
$\int_0^T f(t,u(t),u'(t))dt - B_{\varphi,b}(u'(0))\neq 0$ \ se \ $u_{m}'\geq M_{2} $,
\end{center}
\begin{center}
$\int_0^T f(t,u(t),u'(t))dt - B_{\varphi,b}(u'(0))\neq 0$ \ se \ $u_{M}'\leq M_{1} $.
\end{center}
\item The Brouwer degree	
\begin{center}
$deg_{B}(G,B_{\rho}(0) \cap \mathbb{R}^{2},0)\neq 0$.
\end{center}
\end{enumerate}
Then problem (\ref{equa1}) has  at least one solution.
\end{theorem}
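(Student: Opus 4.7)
The plan is to apply Theorem \ref{teoemaprincipalmisto} with $\Omega = B_\rho(0) \subset C^1$ for the $\rho$ supplied by hypothesis 3 (taken large enough, if necessary, that the a priori bound derived below does not exceed it). The real work is therefore to establish a uniform a priori bound on all fixed points of the two homotopies that appear in Theorem \ref{teoemaprincipalmisto}: the family $u = Z(\lambda,u)$ for $\lambda \in [0,1]$ and the family (\ref{misto3}) for $\lambda \in (0,1]$.

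I would first derive the common integral identity $\int_0^T f(t,u(t),u'(t))\,dt = B_{\varphi,b}(u'(0))$ for every fixed point of either homotopy. For a solution of (\ref{misto3}), integrating $(\varphi(u'))' = \lambda N_f(u) + (1-\lambda)Q(N_f(u))$ over $[0,T]$, using $u'(T) = bu'(0)$ together with the fact that $\int_0^T Q(N_f(u))(s)\,ds = \int_0^T N_f(u)(s)\,ds$, gives the identity; for $u = Z(\lambda,u)$ it is contained in Lemma \ref{soluciz}. Hypothesis 2 then forces $u'_m < M_2$ and $u'_M > M_1$, and a short case analysis (the intermediate value theorem when $u'_m < M_1 < M_2 < u'_M$, and taking the relevant extremum of $u'$ otherwise) produces some $\tau \in [0,T]$ with $u'(\tau) \in [M_1,M_2]$. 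Hypothesis 1 bounds the right-hand side of each ODE by $\|h\|_\infty$, so integrating from $\tau$ to $t$ and exploiting the monotonicity of the homeomorphism $\varphi$ yields
\[
|\varphi(u'(t))| \leq \max(|\varphi(M_1)|,|\varphi(M_2)|) + T\|h\|_\infty =: K
\]
for all $t \in [0,T]$. Since $\varphi^{-1}$ sends bounded sets to bounded sets, $\|u'\|_\infty \leq L$ for some $L > 0$ depending only on $h, M_1, M_2, T, \varphi$; the boundary condition $u(0) = u'(0)$ (which holds in both families) then gives $|u(0)| \leq L$, so $\|u\|_\infty \leq L(1+T)$ and therefore $\|u\|_1 \leq L(2+T) =: R$.

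Taking $\Omega = B_\rho(0) \subset C^1$ with $\rho > R$, no fixed point of $Z(\lambda,\cdot)$ for $\lambda \in [0,1]$ and no solution of (\ref{misto3}) for $\lambda \in (0,1]$ can lie on $\partial\Omega$, verifying conditions 1 and 3 of Theorem \ref{teoemaprincipalmisto}. Under the identification $(x,y) \approx x + yt$ one has $\Omega \cap \mathbb{R}^2 = B_\rho(0) \cap \mathbb{R}^2$, so hypothesis 3 of the present theorem supplies condition 2 of Theorem \ref{teoemaprincipalmisto}, and an application of that theorem produces the desired solution of (\ref{equa1}). I expect the main obstacle to be the step linking hypothesis 2 to a pointwise $C^1$ bound: one must use condition 2 to pin down a $\tau$ where $u'(\tau)$ is already confined to $[M_1, M_2]$, so that the uniform estimate on $(\varphi(u'))'$ coming from hypothesis 1 can then propagate the bound to the whole interval via $\varphi^{-1}$.
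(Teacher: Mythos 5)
Your proposal is correct and follows essentially the same route as the paper: derive the identity $\int_0^T f(t,u,u')\,dt = B_{\varphi,b}(u'(0))$ for fixed points of both homotopies, use hypothesis 2 to locate a point $\tau$ with $u'(\tau)\in[M_1,M_2]$, propagate an a priori bound on $\|u'\|_\infty$ through $\varphi$ and $\varphi^{-1}$ via hypothesis 1, convert it to a $C^1$ bound using $u(0)=u'(0)$, and invoke Theorem \ref{teoemaprincipalmisto} with $\Omega=B_\rho(0)$. The only cosmetic differences are that you obtain the integral identity by integrating the equation over $[0,T]$ rather than evaluating the fixed-point equation at $t=0$, and you use the bound $T\|h\|_\infty$ in place of the paper's $2\|h\|_{L^1}$; both are valid.
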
 
\begin{proof} 
 Let $(\lambda,u)\in \left[0,1\right]\times C^{1}$ be such that $u$ is a solution of (\ref{misto3}). Using (\ref{igualdade}), we have that
 \begin{center}
$u= M(\lambda, u ) = Q(N_{f}(u))- \frac{B_{\varphi,b}(Pu)}{T}+H\left(\varphi^{-1}\left[\varphi(Pu)+ \lambda H(N_{f}(u)-Q(N_{f}(u)))+\frac{\Psi B_{\varphi,b}(Pu)}{T}\right]\right)+P(u)$.
\end{center}
 By evaluation of $u$ at $0$, we obtain 
\begin{center}
$\int_0^T f(t,u(t),u'(t))dt - B_{\varphi,b}(u(0))= 0$.
\end{center}
 Differentiating $u$ and using the fact that $u'(0)=u(0)$, we deduce that  
\begin{center}
$\int_0^T f(t,u(t),u'(t))dt - B_{\varphi,b}(u'(0))= 0$.
\end{center}
 Now by hypothesis 2 it follows that 
 \begin{center}
$u_{m}'<M_{2}$ \ and \  $u_{M}'>M_{1}$. 
\end{center}
Then, there exists $\omega \in [0,T]$ such that  $M_{1}<u'(\omega)<M_{2}$. Moreover,
\begin{center}
$\int_\omega ^t (\varphi(u'(s)))'ds= \lambda \int_\omega ^t  N_{f}(u)(s)ds +(1-\lambda) \int_\omega ^t Q( N_{f}(u))(s)ds$
\end{center}
for all $t\in \left[0, T\right]$. By hypothesis 1 it follows that 
\begin{center}
$\left| \varphi(u'(t))\right| \leq \left| \varphi(u'(\omega))\right|+ 2\left\|h\right\|_{L^{1}} <  L + 2\left\|h\right\|_{L^{1}} $,
\end{center}
where  $L= $max$ \left\{ \left| \varphi(M_{2})\right|, \left| \varphi(M_{1})\right| \right\}$. Hence, 
\begin{center}
$\left\|u'\right\|_{\infty}< a$,
\end{center}
where  $a= $max$\left\{\left|\varphi^{-1}(L+2\left\|h\right\|_{L^{1}})\right|, \left|\varphi^{-1}(-L-2\left\|h\right\|_{L^{1}})\right|\right\}$. Using the fact that  $u'(0)=u(0)$, we obtain 
\begin{center}
$\left|u(t)\right|\leq \left|u(0)\right|+ \int_0^T \left|u'(s)\right|dt < a+aT  \  \   (t\in [0,T])$,
\end{center}
and hence
\begin{center}
$\left\|u\right\|_{1}=\left\|u\right\|_{\infty}+\left\|u'\right\|_{\infty}< a+aT+a=a(2+T)= R_{1}$.
\end{center} 
  
Let $(\lambda,u)\in \left[0,1\right]\times C^{1}$ be such that $u=Z(\lambda,u)$. Using Lemma \ref{soluciz}, $u$ is a solution of (\ref{equazeta}), which implies that 
\begin{center}
$\int_0^T f(t,u(t),u'(t))dt - B_{\varphi,b}(u'(0))= 0$.
\end{center} 
 Using hypothesis 2 it follows that there exists $\tau \in [0,T]$ such that  $M_{1}<u'(\tau)<M_{2}$. Moreover,
 \begin{center}
$\left| \varphi(u'(t))\right| \leq \left| \varphi(u'(\tau))\right|+  \left|\lambda \int_\tau ^t  Q(N_{f}(u))(s)ds\right| $
\end{center}
for all $t\in \left[0, T\right]$. Now by hypothesis 1 it follows that
\begin{center}
$\left| \varphi(u'(t))\right| <  L + \left\|h\right\|_{L^{1}} $.
\end{center}
Hence, 
\begin{center}
$\left\|u'\right\|_{\infty}< b$,
\end{center}
where  $b=$max$\left\{\left|\varphi^{-1}(L+\left\|h\right\|_{L^{1}})\right|, \left|\varphi^{-1}(-L-\left\|h\right\|_{L^{1}})\right|\right\}$.  Now for  $t\in [0,T]$
\begin{center}
$\left|u(t)\right|\leq \left|u(0)\right|+ \int_0^T \left|u'(s)\right|dt < b+bT $,
\end{center}
and hence
\begin{center}
$\left\|u\right\|_{1}=\left\|u\right\|_{\infty}+\left\|u'\right\|_{\infty}< b+bT+b=b(2+T)= R_{2}$.
\end{center} 
Defining $\Omega=B\rho(0) $ in Theorem \ref{teoemaprincipalmisto}, where $B\rho(0)$  is the open ball in $C^{1}$ center $0$ and radius $\rho\geq $max$\left\{R_{1}, R_{2}\right\}$, we can guarantee the existence of at least a solution of (\ref{equa1}).
\end{proof}
 
Using the same arguments as in the proof of Theorem \ref{perro} we obtain the following existence result.
\begin{theorem}\label{impar} 
 Let $\varphi$  be an odd homeomorphism. Assume that the following conditions hold.  
\begin{enumerate}
\item  There exists a function  $h\in C$ such that
\begin{center}
$\left|f(t,x,y)\right|\leq h(t) \  for  \  all  \  (t,x,y)\in \left[0,T\right]\times \mathbb{R}^{2}$.
\end{center}
\item The Brouwer degree	
\begin{center}
$deg_{B}(G,B_{\rho}(0) \cap \mathbb{R}^{2},0)\neq 0, \ for \  a \  opportune  \  \rho > 0$.
\end{center}
\end{enumerate}
Then problem (\ref{equa1}) with $b=-1$  has  at least one solution.
\end{theorem}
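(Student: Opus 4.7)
The plan is to mimic the proof of Theorem \ref{perro}, reducing the problem to Theorem \ref{teoemaprincipalmisto} by producing the a priori bounds needed for hypotheses 1 and 3 of that theorem (hypothesis 2 is exactly the present hypothesis 2). The central observation I would use is that, since $\varphi$ is odd and $b=-1$,
\[
B_{\varphi,-1}(x) = \varphi(-x) - \varphi(x) = -2\varphi(x),
\]
so $B_{\varphi,-1}$ is itself a homeomorphism of $\mathbb{R}$ with a single zero at $x=0$. Any identity of the shape $\int_0^T f\,dt = B_{\varphi,-1}(\cdot)$ therefore gives a direct quantitative bound on its argument, which is precisely what the sign hypothesis 2 of Theorem \ref{perro} was put in place to provide.

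First I would handle solutions $u$ of (\ref{misto3}) for $\lambda \in (0,1]$. Exactly as in the proof of Theorem \ref{perro}, evaluating $u=M(\lambda,u)$ at $0$ and then differentiating yields $\int_0^T f(t,u,u')\,dt = B_{\varphi,-1}(u'(0)) = -2\varphi(u'(0))$, so hypothesis 1 forces $|\varphi(u'(0))| \leq \tfrac12\|h\|_{L^1}$. Integrating $(\varphi(u'))' = \lambda N_f(u) + (1-\lambda)Q(N_f(u))$ from $0$ to $t$ and bounding each integral by $\|h\|_{L^1}$ gives $|\varphi(u'(t))| \leq \tfrac32\|h\|_{L^1}$ for all $t \in [0,T]$. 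Since $\varphi^{-1}$ is odd, this translates into $\|u'\|_\infty \leq |\varphi^{-1}(\tfrac32\|h\|_{L^1})|$, and combining with $u(0)=u'(0)$ and the usual estimate $\|u\|_\infty \leq |u(0)| + T\|u'\|_\infty$ produces a bound $\|u\|_1 \leq R_1$ depending only on $\varphi$, $h$ and $T$.

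Next I would treat fixed points of $Z(\lambda,\cdot)$. By Lemma \ref{soluciz} the same kind of integral identity becomes $\int_0^T f\,dt = -2\varphi(u(0))$, and equation (\ref{equa2}) simplifies explicitly to
\[
\varphi(u'(t)) = \bigl(1 - 2\lambda t/T\bigr)\varphi(u(0)).
\]
Since $|1-2\lambda t/T| \leq 1$ for $t \in [0,T]$ and $\lambda \in [0,1]$, we get $|\varphi(u'(t))| \leq |\varphi(u(0))| \leq \tfrac12\|h\|_{L^1}$, and the same odd-inversion argument as above yields $\|u\|_1 \leq R_2$ for some constant depending only on $\varphi$, $h$, $T$.

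Finally, I would set $\Omega = B_\rho(0)$ with $\rho \geq \max\{R_1,R_2\}$ chosen as the opportune radius in hypothesis 2, so that the Brouwer degree $\deg_B(G,\Omega\cap\mathbb{R}^2,0)\neq 0$. The two estimates above verify hypotheses 1 and 3 of Theorem \ref{teoemaprincipalmisto}, and hypothesis 2 of that theorem is exactly our hypothesis 2. Applying Theorem \ref{teoemaprincipalmisto} delivers a solution of (\ref{equa1}) with $b=-1$. I do not foresee any serious obstacle; the only care-demanding point is checking that the constants $R_1,R_2$ are genuinely uniform in $(\lambda,u)$, which boils down to tracking the $\lambda$-dependent coefficients in the two homotopies and noting that both the $N_f$ and $QN_f$ contributions are dominated by $\|h\|_{L^1}$.
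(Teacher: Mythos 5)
Your proposal is correct and is essentially the argument the paper intends: the paper proves Theorem \ref{impar} only by the remark ``use the same arguments as in Theorem \ref{perro}'', and your observation that oddness of $\varphi$ with $b=-1$ gives $B_{\varphi,-1}(x)=-2\varphi(x)$, so that the identity $\int_0^T f\,dt=B_{\varphi,-1}(u'(0))$ directly bounds $u'(0)$ and replaces hypothesis 2 of Theorem \ref{perro}, is exactly the right substitution. The only cosmetic point is that your bounds are non-strict, so take $\rho$ strictly larger than $\max\{R_1,R_2\}$ (or sharpen the estimates to strict inequalities) to guarantee no fixed point lies on $\partial B_\rho(0)$.
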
 
 
In the next lemma, we  adapt the ideas of Ward \cite{wardjr} to obtain the required a priori bounds. 
\begin{lemma}\label{ward}
Assume that $f$ satisfies the following conditions.
\begin{enumerate}
\item There exists  $c \in C$ such that 
\begin{center}
$f(t,x,y)\geq c(t)$
\end{center}
for all $(t,x,y)\in [0, T]\times \mathbb{R}\times \mathbb{R}$.
\item There exists  $M_{1}<M_{2}$  such that for all  $u\in C^{1}$,
\begin{center}
$\int_0^T f(t,u(t),u'(t))dt \neq 0$ \ if \ $u_{m}'\geq M_{2} $,
\end{center}
\begin{center}
$\int_0^T f(t,u(t),u'(t))dt \neq 0$ \ if \ $u_{M}'\leq M_{1}$.
\end{center}
\end{enumerate}
If $b=1$ and $(\lambda, u)\in \left[0, 1\right]\times C^{1}$ is such that $u=M(\lambda,u)$, then $$\left\|u'\right\|_{\infty}< r,$$ where  $$r= \max \left\{ \left|\varphi^{-1}\left(L+2\left\|c^{-}\right\|_{L^{1}}\right)\right|,  \left|\varphi^{-1}\left(-L-2\left\|c^{-}\right\|_{L^{1}}\right)\right|\right\},  L=\max \left\{ \left| \varphi(M_{2})\right|, \  \left| \varphi(M_{1})\right| \right\}.$$
\end{lemma}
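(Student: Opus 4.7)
The plan is to imitate the a priori bound arguments of Theorem \ref{perro}, but to cope with the weaker hypothesis on $f$ (only a one-sided bound $f\ge c$, no $L^\infty$ bound) by adapting Ward's trick.

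First I would translate the fixed point equation $u=M(\lambda,u)$ back into the equivalent boundary value problem (\ref{misto3}) via Lemma \ref{mate2}. Exactly as in the proof of Theorem \ref{perro}, evaluating $u=M(\lambda,u)$ at $t=0$ and differentiating and using $u'(0)=u(0)$ gives $\int_0^T f(t,u(t),u'(t))\,dt=B_{\varphi,b}(u'(0))$. The crucial simplification now is that $b=1$, so $B_{\varphi,1}\equiv 0$ and hence
\[
\int_0^T f(t,u(t),u'(t))\,dt=0.
\]
In particular $Q(N_f(u))\equiv 0$, so the ODE in (\ref{misto3}) collapses to $(\varphi(u'))'=\lambda N_f(u)$.

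Next I would invoke hypothesis 2 to conclude $u'_m<M_2$ and $u'_M>M_1$, which produces an $\omega\in[0,T]$ with $M_1<u'(\omega)<M_2$. Since $\varphi$, being a homeomorphism of $\mathbb R$, is strictly monotone, the open interval $(M_1,M_2)$ is mapped by $\varphi$ into an open interval whose endpoints are $\varphi(M_1)$ and $\varphi(M_2)$, so $|\varphi(u'(\omega))|<L$. Integrating $(\varphi(u'))'=\lambda N_f(u)$ from $\omega$ to $t$ gives
\[
\varphi(u'(t))=\varphi(u'(\omega))+\lambda\int_\omega^t f(s,u(s),u'(s))\,ds.
\]

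The main step, and the only place where something beyond Theorem \ref{perro}'s argument is needed, is bounding $\int_\omega^t f\,ds$ without having a pointwise bound on $|f|$. Here I would apply the Ward observation: the hypothesis $f\ge c$ gives $f^-\le c^-$ pointwise along $(u,u')$, so $\int_0^T f^-(t,u(t),u'(t))\,dt\le \|c^-\|_{L^1}$; but $\int_0^T f\,dt=0$ forces $\int_0^T f^+\,dt=\int_0^T f^-\,dt$, whence
\[
\int_0^T |f(t,u(t),u'(t))|\,dt\le 2\|c^-\|_{L^1}.
\]
Combining, $|\varphi(u'(t))|<L+2\|c^-\|_{L^1}$ for every $t\in[0,T]$, and applying $\varphi^{-1}$ (using monotonicity of $\varphi$ to convert the bound on $\varphi(u')$ into a bound on $u'$) yields $\|u'\|_\infty<r$.

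The hard part of the argument is really conceptual rather than technical: recognizing that the one-sided bound $f\ge c$ together with the orthogonality condition $\int_0^T f(t,u,u')\,dt=0$ (which is automatic when $b=1$) is enough to upgrade to a two-sided $L^1$ bound $\int|f|\le 2\|c^-\|_{L^1}$. Once this observation is made the rest of the proof is a routine reprise of the integration argument from Theorem \ref{perro}.
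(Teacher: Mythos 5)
Your proposal is correct and follows essentially the same route as the paper: the paper's proof is literally ``use the same arguments as in Theorem \ref{perro} together with the pointwise inequality $\left|f(t,u(t),u'(t))\right|\leq f(t,u(t),u'(t))+2c^{-}(t)$,'' which, integrated against the identity $\int_0^T f(t,u(t),u'(t))\,dt=B_{\varphi,1}(u'(0))=0$, is exactly your bound $\int_0^T\left|f\right|\,dt\leq 2\left\|c^{-}\right\|_{L^{1}}$. Your write-up just makes explicit the steps the paper leaves implicit.
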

\begin{proof}
Use the same arguments as in the proof of Theorem \ref{perro}  and  the following inequality  $\left|f(t,u(t),u'(t))\right|\leq f(t,u(t),u'(t)) + 2c^{-}(t)$  for all $t\in \left[0,T\right]$.
\end{proof}

Now we can prove  an existence theorem for (\ref{equa1}).
\begin{theorem}\label{exem} 
 Let  $f$ be continuous and satisfy conditions (1) and (2) of Lemma \ref{ward}. Assume that the following conditions hold for some $\rho \geq r(2+T)$.  
\begin{enumerate}
\item  The equation
\begin{center}
$ G(x,y)=(0,0)$,
\end{center}
has no solution on  $\partial B_{\rho}(0)\cap\mathbb{R}^{2}$, where we consider the natural identification $(x,y)\approx x+yt$ of  $\mathbb R^{2}$ with related functions in $C^{1}$.
\item The Brouwer degree	
\begin{center}
$deg_{B}(G,B_{\rho}(0) \cap \mathbb{R}^{2},0)\neq 0$,
\end{center}
\end{enumerate}
Then problem (\ref{equa1}) with $b=1$ has a solution.
\end{theorem}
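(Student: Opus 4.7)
The plan is to apply Theorem \ref{teoemaprincipalmisto} with $\Omega = B_{\rho}(0)\subset C^{1}$, the open ball of radius $\rho \ge r(2+T)$. There are three hypotheses to verify: a priori bounds for the $Z$--homotopy, a priori bounds for the $M$--homotopy, and a nonzero Brouwer degree. Hypothesis (2) of the target theorem is exactly hypothesis (2) of Theorem \ref{teoemaprincipalmisto}, so nothing to do there.

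For hypothesis (3) of Theorem \ref{teoemaprincipalmisto}, suppose $(\lambda,u)\in[0,1]\times C^{1}$ with $u=M(\lambda,u)$. Since we assumed $b=1$ and $f$ satisfies (1) and (2) of Lemma \ref{ward}, that lemma yields $\|u'\|_{\infty}<r$. The three--point boundary condition $u'(0)=u(0)$ gives $|u(0)|<r$, so for every $t\in[0,T]$,
\[
|u(t)|\le |u(0)|+\int_{0}^{t}|u'(s)|\,ds < r+rT,
\]
and therefore $\|u\|_{1}=\|u\|_{\infty}+\|u'\|_{\infty}<r(2+T)\le\rho$. Hence $u\notin\partial\Omega$.

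For hypothesis (1) of Theorem \ref{teoemaprincipalmisto}, suppose $(\lambda,u)\in[0,1]\times C^{1}$ with $u=Z(\lambda,u)$. By Lemma \ref{soluciz} and $b=1$, we have $B_{\varphi,1}(u(0))=\varphi(u(0))-\varphi(u(0))=0$, so $\int_{0}^{T}f(t,u,u')\,dt=0$ and $(\varphi(u'))'=\lambda Q(N_{f}(u))=0$. Thus $\varphi(u')$, hence $u'$, is constant; combined with $u'(0)=u(0)$ this forces $u(t)=u(0)(1+t)$. Writing $x=u(0)$, this $u$ lies in the two--dimensional subspace of ``related'' functions and corresponds to $(x,x)\in\mathbb{R}^{2}$, for which a direct computation gives
\[
G(x,x)=\left(\tfrac{B_{\varphi,1}(x)}{T}-\tfrac{1}{T}\int_{0}^{T}f(t,x+xt,x)\,dt,\ 0\right)=(0,0).
\]
By hypothesis (1) of Theorem \ref{exem}, no such zero of $G$ lies on $\partial B_{\rho}(0)\cap\mathbb{R}^{2}$, so under the natural identification $u\notin\partial\Omega$.

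With all three hypotheses verified, Theorem \ref{teoemaprincipalmisto} yields a solution of (\ref{equa1}) in $\Omega$. The main delicate step is the $Z$--homotopy bound: the $b=1$ case is exceptional because $B_{\varphi,1}\equiv 0$ makes both the integral condition and the differential equation in (\ref{equazeta}) degenerate, so the a priori control of $\|u'\|_{\infty}$ that worked for generic $b$ in Theorem \ref{perro} breaks down and must be replaced by the observation that all $Z$--fixed points collapse to the affine functions $x(1+t)$, whose exclusion from $\partial\Omega$ is then imposed as the nonvanishing hypothesis on $G$ rather than derived from a growth estimate.
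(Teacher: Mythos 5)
Your proposal is correct and follows essentially the same route as the paper: apply Theorem \ref{teoemaprincipalmisto} with $\Omega=B_{\rho}(0)$, use Lemma \ref{ward} (plus $u'(0)=u(0)$) to bound the $M$-fixed points, and observe that since $B_{\varphi,1}\equiv 0$ every $Z$-fixed point collapses to a related function $x(1+t)$ with $G(x,x)=(0,0)$, which hypothesis (1) keeps off $\partial B_{\rho}(0)$. The only difference is cosmetic: you derive the affine form of the $Z$-fixed points through Lemma \ref{soluciz} and the vanishing of $(\varphi(u'))'$, while the paper reads it directly off the formula for $Z$.
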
  
\begin{proof}
If $b=1$ and  $ (\lambda, u) \in \left[0,1\right]\times C^{1}$ is such that $u=Z(\lambda,u)$,  by evaluation of $u$ at $0$, we have  that
\begin{equation}\label{oso}
\int_0^T f(t,u(t),u'(t))dt=0.
\end{equation}
Moreover, $u$  is a function of the form $u(t)=x + yt, \ y=x $. Thus, By (\ref{oso})
\begin{center}
$\int_0^T f(t,x+yt,y)dt=0$,
\end{center}
which, together with hypothesis 1, implies that $u= x+tx \notin \partial B_{\rho}(0)$.

Let $b=1$ and $(\lambda,u)\in \left[0,1\right]\times C^{1}$ be such that $u=M(\lambda,u)$. Using Lemma \ref{ward}, we have  that $\left\|u\right\|_{1}<r(2+T)$. Thus we have proved that (\ref{misto3}) has no solution in $\partial B_{\rho}(0)$ for $b=1$ and $(\lambda,u)\in \left[0,1\right]\times C^{1}$, hence the conditions of Theorem \ref{teoemaprincipalmisto} are satisfied, the proof is complete.
\end{proof}

Our next theorem is a generalization of Theorem \ref{impar}. We need first of the following applications.
The \textit{differential operator} 
\begin{center}
$D: {\rm dom}(D) \rightarrow C, \ \  u \mapsto u'$,
\end{center}
onde ${\rm dom}(D)=\left\{ u\in C^{1}_{b}:\varphi(u')\in C^{1}\right\},\  C^{1}_{b}= \left\{u\in C^{1}:u'(T)= bu'(0), \ \  u'(0)= u(0)\right\}$.

The operator
\begin{center}
$D_{\varphi}: {\rm dom}(D_{\varphi}) \rightarrow C,  \ \  u \mapsto (\varphi(u))'$,
\end{center}
onde ${\rm dom}(D_{\varphi})=\left\{u\in C:\varphi(u)\in C^{1}\right\}$.
 
The operator
\begin{center}
$\widetilde{D}_{\varphi}=D_{\varphi} D:{\rm dom}(D)\rightarrow C, \ \  u \mapsto (\varphi(u'))'$.
\end{center}

  When $b<0$,  $-\varphi(\cdot)$  and   $\varphi(b  \ \cdot)$  are simultaneously  increasing or  decreasing. In this case,  $B_{\varphi,b}(\cdot)=  \varphi(b  \ \cdot)  - \varphi(\cdot) $ is injective. Thus, the operator $\widetilde{D}_{\varphi}$ has an inverse given by
\begin{center}
$u \mapsto H\left(\varphi^{-1} \left[ \varphi\left( B^{-1}_{\varphi,b} \left(\int_0^T u(s)ds\right)\right) + \int_0^t u(s)ds \right] \right)  +   B^{-1}_{\varphi,b}\left( \int_0^T u(s)ds\right)$.
\end{center}
Hence,
\begin{align*}
&(\varphi(u'))'  = N_f (u), \ u'(T)=bu'(0), \  u'(T)=bu'(0)   \\
&\Leftrightarrow (D_{\varphi}D)(u)=N_{f}(u),\  u\in {\rm dom}(D) \\
&\Leftrightarrow u = (D_{\varphi}D)^{-1}N_{f}(u), \  u\in C^{1}.
\end{align*}
Hence our problem is finding a fixed point of the operator
\begin{center}
$ \Gamma:= (D_{\varphi}D)^{-1}N_f : C^{1}  \rightarrow {\rm dom}(D)$.

$u \mapsto H\left(\varphi^{-1} \left[\varphi\left( B^{-1}_{\varphi,b} \left(\int_0^T N_f (u)(s) ds\right)\right) + \int_0^t N_f (u)(s)ds\right] \right) +   B^{-1}_{\varphi,b}\left( \int_0^T N_f (u)(s)ds\right)$.
\end{center}

In the next theorem, we adapt the ideas of Bouches and Mawhin \cite{man11} to obtain the existence of at least one solution of (\ref{equa1}).
\begin{theorem}\label{lunes}
If there exists a function  $h\in C$ such that
\begin{center}
$\left|f(t,x,y)\right|\leq h(t)$
\end{center}
for all  $(t,x,y)\in \left[0, T\right]\times \mathbb{R}^{2}$, then problem (\ref{equa1})  with $b<0$  has a solution.
\end{theorem}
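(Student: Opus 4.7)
The plan is to apply Schauder's fixed point theorem to the operator $\Gamma=(D_{\varphi}D)^{-1}N_f$ introduced just before the statement. Since the excerpt already establishes that solutions of (\ref{equa1}) with $b<0$ are exactly the fixed points of $\Gamma$ on $C^{1}$, it suffices to exhibit a closed ball in $C^{1}$ that $\Gamma$ maps into itself and on which $\Gamma$ is completely continuous.

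First I would verify complete continuity of $\Gamma:C^{1}\rightarrow C^{1}$. The argument is essentially a repetition of Lemma \ref{lema1}: $N_f$ sends bounded sets in $C^{1}$ into bounded sets in $C$; the integrations $\int_0^{t}N_f(u)(s)\,ds$ and $\int_0^{T}N_f(u)(s)\,ds$ yield bounded and (via the $L^1$ estimate) equicontinuous families in $C$ and bounded values in $\mathbb{R}$; finally $\varphi^{-1}$ and $B_{\varphi,b}^{-1}$ are continuous and send bounded sets to bounded sets, and one more integration $H$ restores equicontinuity and boundedness of the derivative. Arzel\`a–Ascoli then gives relative compactness of $\Gamma(\Lambda)$ in $C^{1}$ for each bounded $\Lambda\subset C^{1}$.

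Second, and this is where the hypothesis $|f(t,x,y)|\le h(t)$ enters, I would derive an a priori bound on $\|\Gamma(u)\|_1$ that does not depend on $u$. From $|N_f(u)(t)|\le h(t)$ one gets $\bigl|\int_0^{T}N_f(u)(s)\,ds\bigr|\le \|h\|_{L^{1}}$ and $\bigl|\int_0^{t}N_f(u)(s)\,ds\bigr|\le \|h\|_{L^{1}}$ for all $u\in C^{1}$ and $t\in[0,T]$. Since $b<0$ makes $B_{\varphi,b}$ a strictly monotone continuous map $\mathbb{R}\to\mathbb{R}$ with $B_{\varphi,b}(0)=0$ and $B_{\varphi,b}(\pm\infty)=\mp\infty$ (so $B_{\varphi,b}$ is a homeomorphism of $\mathbb{R}$), one has a constant
\[
c_{1}=\max\bigl\{|B_{\varphi,b}^{-1}(\|h\|_{L^{1}})|,\,|B_{\varphi,b}^{-1}(-\|h\|_{L^{1}})|\bigr\}
\]
bounding $|B_{\varphi,b}^{-1}(\int_0^{T}N_f(u)(s)\,ds)|$. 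Setting $c_{2}=\max\{|\varphi(c_{1})|,|\varphi(-c_{1})|\}$, the argument of the outer $\varphi^{-1}$ in the formula for $\Gamma(u)(t)$ is bounded in absolute value by $c_{2}+\|h\|_{L^{1}}$, whence $\|\Gamma(u)'\|_{\infty}\le c_{3}$ where $c_{3}=\max\{|\varphi^{-1}(c_{2}+\|h\|_{L^{1}})|,|\varphi^{-1}(-c_{2}-\|h\|_{L^{1}})|\}$. Combining these,
\[
\|\Gamma(u)\|_{\infty}\le c_{1}+Tc_{3},\qquad \|\Gamma(u)\|_{1}\le c_{1}+(T+1)c_{3}=:R.
\]

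Finally, choose the closed ball $\overline{B}_{R}\subset C^{1}$; the previous bound gives $\Gamma(\overline{B}_{R})\subset\overline{B}_{R}$, and complete continuity of $\Gamma$ has already been established, so Schauder's fixed point theorem produces a fixed point, which by the equivalences recorded just before the theorem is a solution of (\ref{equa1}). The only nontrivial obstacle is the verification that $B_{\varphi,b}$ is actually a homeomorphism of $\mathbb{R}$ for $b<0$ (so that $B_{\varphi,b}^{-1}$ is defined and sends bounded sets to bounded sets); this follows from strict monotonicity of $\varphi(b\,\cdot)-\varphi(\cdot)$ together with its behavior at $\pm\infty$, both immediate consequences of $\varphi$ being a homeomorphism of $\mathbb{R}$ fixing the origin.
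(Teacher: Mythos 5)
Your proposal is correct and follows essentially the same route as the paper: Schauder's fixed point theorem applied to $\Gamma=(D_{\varphi}D)^{-1}N_f$ on a closed ball in $C^{1}$ determined by an a priori bound coming from $|f(t,x,y)|\le h(t)$, together with the complete continuity of $\Gamma$. The only (harmless) difference is in how the derivative bound is derived: the paper observes that $v'(T)=bv'(0)$ with $b<0$ forces a zero $v'(\tau)=0$ and integrates $(\varphi(v'))'$ from $\tau$ to get the radius $\beta(2+T)$ with $\beta=\max\{|\varphi^{-1}(\|h\|_{L^{1}})|,|\varphi^{-1}(-\|h\|_{L^{1}})|\}$, whereas you estimate the explicit formula for $\Gamma(u)$ through $B_{\varphi,b}^{-1}$ and $\varphi$, obtaining a larger but equally valid radius.
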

\begin{proof}
Let us consider $v= \Gamma(u):= (D_{\varphi}D)^{-1}N_f(u)$. Then, 
\begin{center}
$v'(T)= bv'(0), \ v'(0)=v(0)$
\end{center}
and
\begin{center}
$N_{f}(u)= (D_{\varphi}D)(v)=(\varphi(v'))'$.
\end{center}
Because $v\in C^{1}$   is such  that $v'(T)= bv'(0) $, there exists  $\tau \in [0, T]$ such that  $v'(\tau)=0$, which implies  $\varphi(v'(\tau))=0$ and  
\begin{center}
$\left|\varphi(v'(t))\right|=\left|\int_\tau ^t (\varphi (v'(s)))'ds\right| \leq \int_\tau^t \left|N_f(u)(s)\right|ds \leq \int_ 0^T \left|f(s,u(s),u'(s))\right|ds \leq \left\|h\right\|_{L^{1}}  \  \  (t\in [0,T])$,
\end{center}
 which implies
\begin{center}
$\left\|v'\right\|_{\infty}\leq \beta$,
\end{center}
where $\beta = $max$ \left\{\left|\varphi^{-1}\left(\left\|h\right\|_{L^{1}}\right)\right|,  \   \left|\varphi^{-1}\left(-\left\|h\right\|_{L^{1}}\right)\right| \right\}$. Using the fact that  $v'(0)=v(0)$, we deduce that
\begin{center}
$\left| v(t)\right|\leq \left|v(0)\right|+\int_0^t \left |v'(s)\right|ds\leq +\left|v(0)\right|+ \int_0^T \left|v'(s)\right|ds\leq \beta + \beta T $
\end{center}
for all $t\in [0,T]$, and hence 
\begin{center}
$\left\|v\right\|_{1}=\left\|v\right\|_{\infty}+\left\|v'\right\|_{\infty}\leq \beta + \beta T + \beta= \beta(2+T)$.
\end{center}
Because the  $\Gamma $  is completely continuous and bounded, we can  use Schauder's Fixed Point Theorem  to deduce  the existence of at least one fixed point in $\overline{B_{\beta(2+T)}(0)}$. The proof  is complete. 
\end{proof}

Let us give now an application of Theorem \ref{exem}.
\begin{example}
Let us consider the problem 
\begin{equation}\label{exemplo}
\left((u')^{3}\right)' = \frac{e^{u'}}{2} -1, \quad u(0)=u'(0)=u'(T),
\end{equation}
for  $M_{1}=-1, \  M_{2}=1 ,  \  \rho\geq (1+2T)^{1/3}(2+T)$  and  $c(t)=-1$ for all $t\in \left[0, T\right]$. So, problem (\ref{exemplo}) has at least one solution. 
\end{example}

\section*{Acknowledgements}
 This research was supported by CAPES and CNPq/Brazil.

\bibliographystyle{plain}
\renewcommand\bibname{References Bibliogr\'aficas}

\end{document}